\documentclass{article}
\usepackage{amsmath}
\usepackage{amsfonts}
\usepackage{amssymb}
\usepackage{graphicx}%
\usepackage{tkz-graph}
\usepackage{amsmath,amssymb,tikz-cd}
\setcounter{MaxMatrixCols}{30}
\textwidth=17.0cm \textheight=23.0cm \oddsidemargin=0cm
\evensidemargin=1cm \topmargin=-0.8cm \topskip=0mm
\newtheorem{theorem}{Theorem}

\newtheorem{definition}[theorem]{Definition}
\newtheorem{example}[theorem]{Example}

\newtheorem{lemma}[theorem]{Lemma}

\newtheorem{proposition}[theorem]{Proposition}

\newenvironment{proof}[1][Proof]{\noindent\textbf{#1.} }{\ \rule{0.5em}{0.5em}}
\newcommand{\bpartial}{\mathop{\partial\kern -4pt\raisebox{.8pt}{$|$}}}
\newcommand{\bra}{\mathopen{[\kern-1.6pt[}}
\newcommand{\ket}{\mathclose{]\kern-1.5pt]}}
\newcommand{\bbra}{\mathopen{[\kern-2.2pt[\kern-2.3pt[}}
\newcommand{\bket}{\mathclose{]\kern-2.1pt]\kern-2.3pt]}}

\makeindex
\begin{document}
\title {\large{\bf Poisson-Nijenhuis Structure on Lie groupoids from the Invariance's Point of View}}
\vspace{2mm}
\author { \small{ \bf Gh. Haghighatdoost$^1$ }\hspace{-2mm}{\footnote{Corresponding author,  e-mail: gorbanali@azaruniv.ac.ir}} , \small{ \bf J. Ojbag$^2$}\hspace{-1mm}{ \footnote{ e-mail:j.ojbag@azaruniv.ac.ir}}  \\
{\small{$^{1,2}$\em Department
of Mathematics,Azarbaijan Shahid Madani University, 53714-161, Tabriz, Iran}}\\ }
 \maketitle
 
\begin{abstract}
In this paper, we introduce right-invariant (similarly, left-invariant) Poisson-Nijenhuis  Structures on Lie groupoids and their infinitesimal counterparts as called $(\Lambda , \mathbf{n})-$structures. Also, we present a mutual correspondence between $(\Lambda ,\mathbf{n})-$structures on Lie algebroids with Poisson-Nijenhuis structures $(\Pi , \mathbf{N})$ on their Lie groupoids under some conditions. Also, we will construct Poisson-Nijenhuis pair groupoids as an example from the invariance's point of view.
\end{abstract}

\section{Introduction} 
Poisson-Lie groups introduced by Drinfel’d \cite{Drinfeld}. Recently many researchers working on geometric structures on Lie groupoids and try to extend known methods on Lie groups to Lie groupoids. By linearization a Lie groupoid at the units, one can correspond a Lie algebroid to it. Suppose that $G \rightrightarrows M$ be a Lie groupoid with source and target maps $s$ and $t$. We denote it's Lie algebroid by $AG$, equipped with anchor map $\rho$ and bracket $[.,.]$.\\
In section 2 we review some preliminary definitions and theorems about Poisson-Nijenhuis manifolds and groupoids. Moreover we remember some basic subjects about Lie groupoids and Lie algebroids (for more details refer to \cite{F0Magri},\cite{Kosmann},\cite{Kosman2},\cite{Kiril0Mackenzie}).\\
In section 3 we define right-invariant Poisson-Nijenhuis structure on Lie groupoid and infinitesimal counterpart of this, called algebraic structures corresponding to Poisson-Nijenhuis Lie groupoids.
Also, we prove that $(\Pi , \mathbf{N})$-structures on Lie groupoids are in one-to-one correspondence with $(\Lambda , n)$-structures on their Lie algebroids from the Invariance's Point of View.\\
In section 4 we construct right-invariant Poisson-Nijenhuis structure on the pair groupoids. The one-to-one correspondence between $(\Lambda , n)$-structures and right-invariant Poisson-Nijenhuis structure on the pair groupoids has been verified.
\section{Preliminaries}
\begin{definition}
A Lie groupoid consists of two smooth manifolds $G$ and $M$, together with a set of smooth morphisms $\{s, t, u, i, m\}$, namely:
\begin{itemize}
\item The source map $s: G \to M$, which is a surjective submersion;
\item The target map $t: G \to M$, which is a surjective submersion;
\item The unit map $u: M \to G$, which is an embedding and is denoted by $1_x := u(x)$ for $x \in M$. Each $u(x)$ is called a unit;
\item The inversion $i: G \to G$, which is a diffeomorphism and is denoted by $g^{-1}:= i(g)$ for $g \in G$;
\item The multiplication $m: G^2 \to G$ denoted by $(g, h) \to  g.h := m(g, h)$ is smooth, where $G^2=G _s\times_t G=\{(g,h) \in G\times G|s(g)=t(h)\}$.
\end{itemize}
These are subject to the conditions that:
\begin{itemize}

\item[1.] $s(gh)=s(h), t(gh)=t(g)$.
\item[2.] $(gh)k=g(hk)$, whenever $s(g)=t(h)$ and $s(h)=t(k)$.
\item[3.] $1_{t(g)}g=g=g1_{s(g)}$
\item[4.] $gg^{-1}=1_{t(g)}, \;g^{-1}g=1_{s(g)}$.

\end{itemize}
\end{definition}
The main difference between Lie groups and Lie groupoids is that for groups, we could compose any two elements, while for Lie groupoids
this is only possible if the first arrow ends where the second one starts.\\ 
If the base $M$ is a point, a Lie groupouid $G \rightrightarrows M$ is the Lie group.\\
Consider Lie groupoid $G \rightrightarrows M$, for all $x \in M$, $s^{-1}(x)$ is called its source-fibre or $s$-fibre, $G_x := s^{-1}(x) \cap t^{-1}(x)$ its isotropy group and $L_x := t(s^{-1}(x))$ its orbit. $L_x \subset M$ is an embedded submanifold of $G$.
$G$ is called transitive if it has only one orbit. Its orbit space is a single point. The pair groupoid $M \times M \rightrightarrows M$ is an important example of a transitive Lie groupoid.
\begin{definition}
For a Lie groupoid $G \rightrightarrows M$,
\begin{itemize}

 \item $G \rightrightarrows M$ source-connected ($s$-connected), if $s^{-1}(x)$ is connected for each $x \in M$,
 \item $G \rightrightarrows M$ source-simply-connected ($s$-simply connected), if $s^{-1}(x)$ is connected and simply-connected for each $x \in M$.

\end{itemize}
\end{definition}
\begin{definition}\label{def2}
Let Lie groupoids $G \rightrightarrows M ,\; G^\prime \rightrightarrows M^\prime$. A morphism between Lie groupoids is a pair of maps $F: G \to G^\prime \;,\;f:M \to M^\prime $ such that:
\begin{itemize}
\item
$s^\prime \circ F=f \circ s,\;\;t^\prime \circ F=f \circ t$,
\item
$F(hg)=F(h)F(g),\;\;\forall (h,g)\in G \ast G$.
\end{itemize}
If $F$ and (hence) $f$ are diffeomorphisms, the morphism of groupoids called isomorphism of Lie groupoids.
\end{definition}

\begin{definition}
A Lie algebroid is a vector bundle $A$ on base $M$ together with a bracket of sections $\Gamma A \times \Gamma A \to \Gamma A$ and a map $\rho: A \to TM$ such that

\begin{itemize}
\item the bracket of sections makes $\Gamma A$ an $R-$Lie algebra,
\item $[X,fY]=f[X,Y]+\rho (X)(f)Y, \forall X,Y \in \Gamma A, \; f\in C^\infty(M)$,
\item $\rho[X,Y]=[\rho X,\rho Y], \;\; X,Y\in \Gamma A$.
\end{itemize} 
\end{definition}
For a Lie groupoid $G\rightrightarrows M$, restrict $TG$ to the identity elements; get $T_{1M}G$, a vector bundle on $M$. Right-translations $R_g$, map $s-$ fibers to $s-$ fibers. So take the kernel of $T(s): T_{1M}G \to TM$. Call this $AG$.
Each $X\in \Gamma AG$ defines a right-invariant vector field $\overrightarrow{X}$ on $G$ by $\overrightarrow{X}(g)=Xg$. That is, $\overrightarrow{X}$ is $s-$vertical and $\overrightarrow{X}(hg)=\overrightarrow{X}(h)g$ for all $h,g$.
Each right-invariant vector field is $\overrightarrow{X}$ for some $X\in \Gamma AG$. The bracket of right-invariant vector fields is right-invariant. Define bracket on $\Gamma AG$ by $\overrightarrow{[X , Y]}=[\overrightarrow{X},\overrightarrow{Y}]$.  $AG$ is the Lie algebroid of $G$.

\begin{definition}
Let $M$ be a smooth manifold and $N:TM \to TM$ be a vector valued $1-$form, or a $(1,1)-$tensor on $M$. Then its Nijenhuis torsion $\tau N$ is a vector valued $2-form$ deﬁned by 
\[\tau N(X,Y):=[NX,NY ]-N([NX,Y]+[X,NY ]-N[X,Y ]), \; for X,Y\in \Gamma(TM).\]
An $(1,1)$-tensor $N$ is called a Nijenhuis tensor if its Nijenhuis torsion vanishes.
\end{definition}
Given a Nijenhuis tensor $N$, one can define a new Lie algebroid structure on $TM$ deformed by $N$. We denote this Lie algebroid by $(TM)_N$. The bracket $[ , ]_N$ and anchor $id_N$ of this deformed Lie algebroid are given by
\[[X,Y]_N=[NX,Y]+[X,NY ]-N[X,Y ]\; \text{and}\; id_N=N, \;\forall X,Y\in \Gamma(TM).\]
Let $M$ be a smooth manifold and $\Pi \in \Gamma (\wedge^2 TM)$ a bivector field on $M$. Then one can define a skew-symmetric bracket $[, ]_\Pi$ on the space $\Omega^1(M)$ of $1$-forms on $M$, given by
\[[\alpha,\beta]_\Pi :=\mathcal{L}_{\Pi^{\sharp}\alpha}\beta-\mathcal{L}_{\Pi^{\sharp}\beta}\alpha-d(\Pi(\alpha,\beta)), \forall \alpha,\beta \in \Gamma(T^{\ast}M),\] where $\Pi^{\sharp}:T^\ast M \to TM$, $\alpha \to \Pi(\alpha , -)$ is the bundle map induced by $\Pi$. If $\Pi$ is a Poisson bivector (that is, $[\Pi,\Pi]=0$), then the cotangent bundle $T^\ast M$ with the above bracket and the bundle map $\Pi^{\sharp}$ forms a Lie algebroid. We call this Lie algebroid as the cotangent Lie algebroid of the Poisson manifold $(M,\Pi)$ and is denoted by $(T^\ast M)_\Pi$. $(T^\ast M)_\Pi$ knows everything about $(M,\Pi)$ . 

\begin{definition}\label{2.6}
A Poisson-Nijenhuis manifold is a manifold $M$ together with  a Poisson bivector $\Pi \in \Gamma(\wedge^2 TM)$ and a Nijenhuis tensor $N$ such that  they are compatible in the following senses:
\begin{itemize}
\item $N \circ \Pi^{\sharp}=\Pi^{\sharp} \circ N^\ast$ (thus, $N \circ \Pi^{\sharp}$ defines a bivector field $N \Pi$ on $M$),
\item $C(\Pi,N)\equiv 0$,
\end{itemize}
where
\[C(\Pi,N)(\alpha,\beta):=[\alpha,\beta]_{N\Pi}-([N^\ast \alpha,\beta]_\Pi+[\alpha,N^\ast \beta]_\Pi-N^\ast[\alpha,\beta]_\Pi),\; \text{for}\; \alpha, \beta \in \Omega^1(M).\]
The skew-symmetric $C^\infty (M)$-bilinear operation $C(\Pi,N)(-,-)$ on the space of $1$-forms is called the Magri-Morosi concomitant of the Poisson structure $\Pi$ and the Nijenhuis tensor $N$.\\
A Poisson-Nijenhuis manifold as above is denoted by the triple $(M,\Pi,N)$. If $\Pi$ is non-degenerate, that is, defines a symplectic structure $\omega$ on $M$, then $(M,\omega,N)$ is said to be a symplectic-Nijenhuis manifold.
\end{definition}

\begin{definition}
A Poisson groupoid is a Lie groupoid $G\rightrightarrows M$ equipped with a Poisson structure $\Pi$ on $G$, if $\Pi^{\sharp}:T^\ast G \to TG$ be a Lie groupoid morphism from the cotangent Lie groupoid $T^{\ast} G \rightrightarrows A^\ast G$ to the tangent Lie groupoid $TG \rightrightarrows TM$.


\end{definition}

\begin{definition}\label{defN}
Let $G \rightrightarrows M$ be a Lie groupoid. A multiplicative $(1,1)$-tensor $\mathbf{N}$ on the groupoid is a pair $(N,N_M)$ of $(1,1)$-tensors on $G$ and $M$, respectively, such that
\begin{center}
\begin{tikzcd}
  TG \arrow[to=2-1, shift right=0.65ex]\arrow[to=2-1, shift left=0.65ex]\arrow[to=1-2, "N"]
  & TG \arrow[to=2-2, shift right=0.65ex]\arrow[to=2-2, shift left=0.65ex] \\
   TM \arrow[to=2-2]\arrow[to=2-2, "N_M"]
  & TM

\end{tikzcd}
\end{center}
is a Lie groupoid morphism from the tangent Lie groupoid to itself. In this case, $N_M$ is the restriction of $N$ to the unit space $M$. Thus, $N_M$ is completely determined by $N$. Hence we may use $N$ to denote a multiplicative $(1,1)$-tensor.
\end{definition}

\begin{definition}
A Nijenhuis groupoid is a Lie groupoid $G\rightrightarrows M$ together with a multiplicative Nijenhuis tensor $\mathbf{N}$ (cf. Definition \ref{defN}). A Nijenhuis groupoid may also be denoted by $(G\rightrightarrows M,\mathbf{N})$.

\end{definition}

\begin{definition}
A Poisson-Nijenhuis groupoid is a Lie groupoid $G\rightrightarrows M$ together with a Poisson-Nijenhuis structure $(\Pi,N)$ on $G$ such that $(G\rightrightarrows M,\Pi)$ forms a Poisson groupoid and $(G\rightrightarrows M,N)$ a Nijenhuis groupoid. Thus, a Poisson-Nijenhuis groupoid is a Poisson groupoid $(G\rightrightarrows M,\Pi)$ together with a multiplicative Nijenhuis tensor $N:TG \to TG$ such that $(G,\Pi,N)$ is a Poisson-Nijenhuis manifold. A Poisson-Nijenhuis groupoid as above is denoted by the triple $(G\rightrightarrows M,\Pi,N)$.
\end{definition}

\section{Right-invariant Poisson-Nijenhuis structures on Lie groupoids}
In this section, we define Poisson-Nijenhuis Lie groupoids from the invariant point of view, and their infinitesimal counterpart on the Lie algebroids $AG$ of $G$.
\begin{definition}\label{defp}
A Poisson-Nijenhuis structure $(\Pi,\mathbf{N})$ on a Lie groupoid $G\rightrightarrows M$ is said to be right-invariant, if:
\begin{itemize}
\item[1.] The Poisson structure $\Pi$ is right invariant, i.e., there exists $\Lambda \in \Gamma(\wedge^2 AG)$ such that $\Pi=\overrightarrow{\Lambda}$.
\item[2.] Multiplicative $(1,1)-$tensor $\mathbf{N}=(N,N_M)$ also is right-invariant, i.e., there are linear endomorphisms $n:\Gamma(AG) \to \Gamma(AG)$ and $n_M:TM \to TM$ such that
\[N=\overrightarrow{n},\;\;N_M=\overrightarrow{n_M}\]
\end{itemize}

\end{definition}
In the following we prove our claims only for $N$, beacuse $N_M$ is completely determined by $N$. This is also true for $n$ and $n_M$.
\begin{proposition}\label{pro1}
Let $(\Pi,\mathbf{N})$ be a right-invariant Poisson-Nijenhuis structure on a Lie groupoid $G \rightrightarrows M$ with Lie algebroid $AG$ and space of unites $1_M \subset G$. If $\Lambda \in \Gamma(\wedge^2 AG)$ that are the values of $\Pi$ restricted to space of unites $1_M$ and $(N|_{AG},N_M|_{TM})=\mathbf{n}$, then
\begin{itemize}
\item[1.] $[\Lambda,\Lambda]_{SN}=0$, where $[,]_{SN}$ is the Schouten-Nijehuis bracket,
\item[2.] $\mathbf{n}$ is Nijenhuis operator on $AG$, i.e., $\tau n(X,Y)=\tau n_M(X,Y)=0,\;\forall X,Y \in \Gamma AG$,
\item[3.] $\mathbf{n} \circ \Lambda^\sharp=\Lambda^\sharp \circ \mathbf{n}^\ast$
\item[4.] The Magri-Morosi concomitant's $C(\Lambda,\mathbf{n})(\alpha,\beta)=0$,
\item[5.] $\overrightarrow{\Lambda}^\sharp$ and $\overrightarrow{\mathbf{n}}$ are Lie groupoid morphisms.
\end{itemize}

\end{proposition}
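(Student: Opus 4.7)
The guiding principle is that every identity appearing in (1)--(4) is an equation of right-invariant tensor fields on $G$, and a right-invariant tensor is determined by its value on the unit section $1_M$. Since $R_g$ is a diffeomorphism from the $s$-fibre over $t(g)$ to the $s$-fibre over $s(g)$, it commutes with Lie brackets of vector fields and hence with every derived operation (Schouten--Nijenhuis bracket, Nijenhuis torsion, Lie derivatives of forms, etc.). I would first record three auxiliary intertwinings used throughout:

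\smallskip
(i) For $\Lambda\in\Gamma(\wedge^2 AG)$, one has $[\overrightarrow{\Lambda},\overrightarrow{\Lambda}]_{SN}=\overrightarrow{[\Lambda,\Lambda]_{SN}}$, where the right-hand bracket is the Gerstenhaber bracket on $\Gamma(\wedge^{\bullet}AG)$ associated with the Lie algebroid $AG$.

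(ii) For $n:\Gamma(AG)\to\Gamma(AG)$ the Nijenhuis torsion of $\overrightarrow{n}$ on $G$ is right-invariant and equals $\overrightarrow{\tau n}$, with $\tau n$ computed via the algebroid bracket.

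(iii) Right-invariant $1$-forms on $G$ are in bijection with sections of $A^{*}G$, and under this correspondence the cotangent-type bracket $[\cdot,\cdot]_{\Pi}=[\cdot,\cdot]_{\overrightarrow{\Lambda}}$ on $\Omega^{1}(G)$ restricts to the corresponding bracket $[\cdot,\cdot]_{\Lambda}$ on $\Gamma(A^{*}G)$.

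\smallskip
Each of (i)--(iii) follows from $[\overrightarrow{X},\overrightarrow{Y}]=\overrightarrow{[X,Y]}$ extended by the Gerstenhaber/Cartan formalism and the fact that right translation is a diffeomorphism along source fibres.

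Granting these intertwinings, the five assertions unfold as follows. For (1), $[\Pi,\Pi]_{SN}=0$ holds because $\Pi$ is Poisson; by (i) this reads $\overrightarrow{[\Lambda,\Lambda]_{SN}}=0$, and evaluation at any unit yields $[\Lambda,\Lambda]_{SN}=0$. For (2), $\tau N=0$ because $N$ is Nijenhuis on $G$; by (ii) we get $\overrightarrow{\tau\mathbf{n}}=0$, and restriction to $1_M$ gives $\tau\mathbf{n}=0$; the statement for $n_M$ follows either by the analogous argument on $M$ or from the intertwining $\rho\circ n=n_M\circ\rho$. For (3) and (4), the pointwise compatibilities $N\circ\Pi^{\sharp}=\Pi^{\sharp}\circ N^{*}$ and $C(\Pi,N)\equiv 0$ on $G$ are specialised to $g=1_x$: the tangent and cotangent fibres there are $(AG)_x$ and $(A^{*}G)_x$, $N|_{1_x}=\mathbf{n}|_x$, and $\Pi|_{1_x}=\Lambda|_x$; using (iii) to identify $[\cdot,\cdot]_{\Pi}$ with $[\cdot,\cdot]_{\Lambda}$ on right-invariant $1$-forms, the vanishing of $C(\Pi,N)$ transcribes verbatim to $C(\Lambda,\mathbf{n})=0$. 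Finally, (5) is essentially tautological under the hypotheses of Definition \ref{defp}: $\Pi=\overrightarrow{\Lambda}$ is the Poisson groupoid structure, so $\overrightarrow{\Lambda}^{\sharp}=\Pi^{\sharp}$ is a Lie groupoid morphism $T^{*}G\rightrightarrows A^{*}G\to TG\rightrightarrows TM$; and $\mathbf{N}=\overrightarrow{\mathbf{n}}$ is multiplicative, which by Definition \ref{defN} is the statement that it is a Lie groupoid morphism $TG\to TG$.

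The principal technical obstacle is (iii): one must verify that a right-invariant $1$-form on $G$ is genuinely determined by its restriction to $AG$, and that the Cartan-type formula defining $[\cdot,\cdot]_{\Pi}$ reduces, after restriction, to the bracket on $\Gamma(A^{*}G)$ induced by $\Lambda$. Once this intertwining and its counterpart (i) for the Schouten bracket are established, the remaining deductions are immediate specialisations to the unit section.
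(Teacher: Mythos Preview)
Your treatment of items (1), (2), (3), and (5) matches the paper's proof almost verbatim: both arguments rely on the intertwining $[\overrightarrow{X},\overrightarrow{Y}]=\overrightarrow{[X,Y]}$ and its extensions to the Schouten bracket and Nijenhuis torsion, followed by evaluation at units, and both dispose of (5) by appeal to the defining multiplicativity properties.

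The genuine divergence is in item (4). You argue by establishing the intertwining (iii) for the Koszul bracket on $1$-forms and then reading off $C(\Lambda,\mathbf{n})=0$ directly from $C(\Pi,N)=0$ on right-invariant forms. The paper instead interprets $C(\Pi,\mathbf{N})$ as a Lie groupoid morphism $T^{*}G\oplus_G T^{*}G\to T^{*}G$, applies the Lie functor, and transports the result through the canonical isomorphisms $j'_G:A(T^{*}G)\to T^{*}(AG)$ of Mackenzie--Xu, obtaining $C(\Lambda,\mathbf{n})=j'_G\circ\mathcal{L}(C(\Pi,\mathbf{N}))\circ(j'_G\oplus j'_G)^{-1}$. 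Your route is more elementary and self-contained, at the cost of the verification you correctly flag as the main obstacle; the paper's route is cleaner once one accepts the cited machinery, and it avoids any direct manipulation of the Cartan formula. One caution on your side: at a unit $1_x$ the full tangent fibre is $T_{1_x}G$, not $(AG)_x$; what makes your restriction argument go through is that $\Pi=\overrightarrow{\Lambda}$ takes values in $\wedge^{2}T^{s}G$, so $\Pi^{\sharp}$ and the associated bracket only see the $s$-vertical part, which at units is exactly $AG$. You should make that reduction explicit when carrying out (iii).
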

\begin{proof}

1. With respect to definition \ref{defp}, since $\Pi$ is a Poisson bivector and bracket on $\Gamma (AG)$ is defined by $\overrightarrow{[X,Y]}=[\overrightarrow{X},\overrightarrow{Y}]$, hence
\[[\Pi,\Pi]=0 \longrightarrow [\overrightarrow{\Lambda},\overrightarrow{\Lambda}]=\overrightarrow{[\Lambda,\Lambda}]=0\]
2. As we know, the Nijenhuis torsion is
\[\tau n(X,Y):=\overrightarrow{[N,N ](X,Y)}=[\overrightarrow{n},\overrightarrow{n}](X,Y)=[N,N](\overrightarrow{X},\overrightarrow{Y})=0, \; \;\;\forall X,Y\in \Gamma AG.\]
The proof is same for $n_M$.\\
3. Since $\Pi^\sharp(\overrightarrow{\alpha})=\overrightarrow{\Lambda^\sharp(\alpha)}$ and $N^\ast(\overrightarrow{\alpha})=\overrightarrow{n^\ast(\alpha)}$ for $\alpha \in T^\ast A$, we have\\
\[(N \circ \Pi^\sharp-\Pi^\sharp \circ N^\ast)(\overrightarrow{\alpha})=(\overrightarrow{n} \circ \overrightarrow{\Lambda^\sharp}-\overrightarrow{\Lambda^\sharp} \circ \overrightarrow{n^\ast})(\alpha)=\overrightarrow{(n \circ \Lambda^\sharp-\Lambda^\sharp \circ n^\ast)(\alpha)}\]
4. The Magri-Morosi concomitant's defined as $C(\Pi,\mathbf{N}):\Gamma(T^\ast G) \times \Gamma(T^\ast G) \to \Gamma(T^\ast G)$ . Consider the direct product Lie groupoid $T^\ast G \times T^\ast G \rightrightarrows A^\ast \times A^\ast$.\\
The Lie algebroid of Lie subgroupoid $T^\ast G \oplus_G T^\ast G \rightrightarrows A^\ast \oplus_M A^\ast$ is isomorphic to the Lie algebroid $T^\ast A \oplus_A T^\ast A \rightrightarrows A^\ast \oplus_M A^\ast$ \cite{Apurba0Das}. We can consider the Magri-Morosi concomitant as a Lie groupoid morphism from $T^\ast G \oplus_G T^\ast G \rightrightarrows A^\ast \oplus_M A^\ast$ to $T^\ast G \rightrightarrows A^\ast$, therefore we have
\[C(\Lambda ,\mathbf{n})=j^\prime_G \circ \mathcal{L}(C(\Pi,\mathbf{N})) \circ (j^\prime_G \oplus j^\prime_G)^{-1}:T^\ast A \oplus_A T^\ast A \to T^\ast A\]
Where $j: T(AG) \to A(TG)$ and $j^\prime: A(T^\ast G) \to T^\ast(AG)$ are canonical isomorphisms \cite{Kiril0Mackenzie2},\cite{Kiril0Mackenzie3}.\\
Since $C(\Pi ,\mathbf{N})=0$, therefore $C(\Lambda ,\mathbf{n})=0$.\\
According to the definitions \ref{defN} and \ref{defp} (5) is obviuos.
\end{proof}
\\
Similar to the case of lie algebras we can find a linear isomorphism between lie algebroid and tangent space of corresponding Lie groupoid.
\begin{lemma}\label{lem0}
Left invariant vector fields on Lie groupoid $G \rightrightarrows M$ identified with section of Lie algebroid $AG$.
\end{lemma}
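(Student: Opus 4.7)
The plan is to mirror the construction given earlier in the preliminaries for right-invariant vector fields, transporting the identification across the inversion map $i:G\to G$. Since the paper defines $AG=\ker T(s)|_{1_M}$, but left translations act naturally on $t$-fibres rather than $s$-fibres, the inversion will be the bridge between the two pictures.

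First I would record the geometric setup. For each $g\in G$ the left translation $L_g(h):=gh$ is defined precisely on $t^{-1}(s(g))$, and is a diffeomorphism onto $t^{-1}(t(g))$ with inverse $L_{g^{-1}}$. Hence $L_g$ carries $t$-vertical vectors to $t$-vertical vectors. Next, from the groupoid axioms one has $s\circ i = t$ and $t\circ i = s$, so $Ti$ restricted to $T_{1_M}G$ interchanges $\ker T(s)|_{1_M}$ and $\ker T(t)|_{1_M}$, giving a vector bundle isomorphism $AG\cong \ker T(t)|_{1_M}$.

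Then, for a section $X\in\Gamma(AG)$ I would define the associated left-invariant field by
\[\overleftarrow{X}(g)\;:=\;T_{1_{s(g)}}L_g\bigl(-T_{1_{s(g)}}i\,(X(s(g)))\bigr),\]
so that $\overleftarrow{X}$ is tangent to $t$-fibres by construction. Left-invariance $\overleftarrow{X}(gh)=T_hL_g(\overleftarrow{X}(h))$ would then follow from $L_{gh}=L_g\circ L_h$ and the chain rule applied at the unit $1_{s(h)}$, exactly paralleling the argument sketched in the preliminaries for $\overrightarrow{X}$. Smoothness is inherited from $X$ and from smoothness of $i$ and $m$.

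For the converse, I would show that any left-invariant vector field $Y$ on $G$ is $t$-vertical (evaluating left-invariance at a unit forces $Y(1_x)\in \ker T(t)$, and then invariance propagates this along each $t$-fibre), hence is completely determined by the section $x\mapsto -T_{1_x}i^{-1}(Y(1_x))\in\Gamma(AG)$; one checks this inverts the assignment $X\mapsto\overleftarrow{X}$. The main obstacle is essentially bookkeeping: keeping straight which fibration ($s$ or $t$) a given translation preserves, and inserting the sign coming from $Ti$ consistently so that the identification claimed in the statement is a genuine $C^{\infty}(M)$-linear bijection rather than merely a pointwise isomorphism. Note that the lemma asks only for an identification of sections, so there is no need here to compare the bracket on $AG$ (defined in the preliminaries via $\overrightarrow{\,\cdot\,}$) with the bracket of left-invariant fields, which would differ by the usual sign.
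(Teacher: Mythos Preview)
Your argument is correct: transporting the right-invariant identification across the inversion $i$ is exactly the standard way to obtain the corresponding statement for left-invariant fields, and the bookkeeping you outline (that $Ti$ swaps $\ker T(s)|_{1_M}$ with $\ker T(t)|_{1_M}$, that $L_g$ acts on $t$-fibres, and that invariance at a unit forces $t$-verticality) is the right skeleton. The one place to be a little more careful is the claim ``evaluating left-invariance at a unit forces $Y(1_x)\in\ker T(t)$'': strictly speaking, the invariance condition $Y(gh)=T_hL_g(Y(h))$ is only well-posed once $Y(h)$ already lies in the domain of $T_hL_g$, i.e.\ in $\ker T_h(t)$, so $t$-verticality is really built into the hypothesis rather than deduced from it. With that adjustment the construction and its inverse go through as you describe.

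By contrast, the paper does not prove this lemma at all: its ``proof'' is the single citation \cite{Kiril0Mackenzie}, deferring entirely to Mackenzie's monograph for this well-known fact. So your write-up is genuinely more informative than what appears in the paper, at the cost of having to manage the $s$/$t$ and sign conventions explicitly; the paper simply treats the identification as background.
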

\begin{proof}\cite{Kiril0Mackenzie}
\end{proof}
\\
Suppose $G \rightrightarrows M$ is a Lie groupoid with Lie algebroid $AG$ and $\Lambda \in \Gamma (\wedge^2 AG)$ is a bivector, $n:\Gamma(AG) \to \Gamma(AG)$ is a linear endomorphism on $AG$ that satisfies in (1-5) conditions of proposition \ref{pro1}. In the following theorem we proof that $(\overrightarrow{\Lambda},\overrightarrow{n})$ define a Poisson-Nijenhuis structure on Lie groupoid $G$. The $s-$connected and $s-$simply connected for our intended examples of Lie groupoids are necessary, beacuse of we need these to generate Lie groupoid $G \rightrightarrows M$ by flows of exponential map.
\begin{theorem}\label{theorem1}
Let $s-$connected and $s-$simply connected Lie groupoid $G \rightrightarrows M$ with Lie algebroid $AG$. Consider $ \Lambda \in \Gamma(\wedge^2AG)$ be an element  satisfying $ [\Lambda ,\Lambda]=0$. Then $\Pi =\overrightarrow{\Lambda}$ defines a Poisson groupoid structure on $G$.\\
Furthermor for the endomorphisms $n:\Gamma(AG) \to \Gamma(AG)$ and $n_M: TM \to TM$ there exists multiplicative $(1,1)$-tensors $N:TG \to TG,\;N_M:TM \to TM $ such that $ \overrightarrow{n}=N,\;\overrightarrow{n_M}=N_M $ and compatible with $\Pi$. 
\end{theorem}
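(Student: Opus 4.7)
The plan is to transfer each piece of structure from the infinitesimal level to the groupoid level by combining right-invariance with the integration theory for Lie algebroid morphisms, using the $s$-simply connectedness hypothesis where it is essential. The proof naturally splits into four stages mirroring items (1)--(5) of Proposition \ref{pro1}, but now read in the reverse direction.

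\textbf{Stage 1 (Poisson bivector).} First I would recall that the map $X \mapsto \overrightarrow{X}$ extended to sections of $\wedge^{\bullet} AG$ is a morphism of Gerstenhaber algebras for the Schouten--Nijenhuis bracket; on degree-one sections this is the definition of the bracket on $\Gamma(AG)$, and the graded Leibniz rule extends it uniquely. Hence $[\overrightarrow{\Lambda},\overrightarrow{\Lambda}]_{SN} = \overrightarrow{[\Lambda,\Lambda]_{SN}} = 0$, so $\Pi := \overrightarrow{\Lambda}$ is Poisson on $G$.

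\textbf{Stage 2 (Poisson groupoid structure).} The harder point is to upgrade $\Pi$ to a \emph{multiplicative} Poisson tensor, i.e., to show that $\Pi^{\sharp}: T^{\ast}G \to TG$ is a Lie groupoid morphism from the cotangent groupoid $T^{\ast}G \rightrightarrows A^{\ast}G$ to the tangent groupoid $TG \rightrightarrows TM$. The condition $[\Lambda,\Lambda]=0$ makes $(AG, A^{\ast}G)$ a Lie bialgebroid, so at the infinitesimal level $\Lambda^{\sharp}: A^{\ast}G \to AG$ is a Lie algebroid morphism. Here I invoke the Mackenzie--Xu integration theorem: since $G$ is $s$-connected and $s$-simply connected, this Lie algebroid morphism integrates uniquely to a Lie groupoid morphism $T^{\ast}G \to TG$. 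Using the canonical isomorphisms $j, j^{\prime}$ already appearing in the proof of Proposition \ref{pro1}(4), both the integrated morphism and $\overrightarrow{\Lambda}^{\sharp}$ are right-invariant and have the same infinitesimal datum, so they coincide.

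\textbf{Stage 3 (Multiplicative Nijenhuis tensor).} Define $N$ on right-invariant vector fields by $N(\overrightarrow{X}) := \overrightarrow{n(X)}$ and extend $C^{\infty}(G)$-linearly; this gives a well-defined $(1,1)$-tensor because right-invariant vector fields span $TG$ pointwise. The Nijenhuis torsion computation in Proposition \ref{pro1}(2), reversed, yields $\tau N(\overrightarrow{X},\overrightarrow{Y}) = \overrightarrow{\tau n(X,Y)} = 0$, and by $C^{\infty}$-bilinearity $\tau N \equiv 0$ on all of $TG$. For multiplicativity, $n$ together with $n_M$ forms a morphism of the Lie algebroid $AG$ into itself covering $n_M$ on $TM$, which by the same $s$-simply connected integration argument lifts uniquely to a multiplicative $(1,1)$-tensor on $G$ (in the sense of Definition \ref{defN}); uniqueness forces this tensor to equal $\overrightarrow{n}$.

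\textbf{Stage 4 (Compatibility).} The algebraic compatibility $N \circ \Pi^{\sharp} = \Pi^{\sharp} \circ N^{\ast}$ is immediate from the identities $\Pi^{\sharp}(\overrightarrow{\alpha}) = \overrightarrow{\Lambda^{\sharp}(\alpha)}$ and $N^{\ast}(\overrightarrow{\alpha}) = \overrightarrow{n^{\ast}(\alpha)}$ applied to a spanning set of right-invariant $1$-forms. The vanishing of the Magri--Morosi concomitant $C(\Pi,N)$ transfers via the same diagram used in Proposition \ref{pro1}(4): since $C(\Pi,N)$ is a Lie groupoid morphism whose infinitesimalization is $C(\Lambda,\mathbf{n}) = 0$, the $s$-simply connected hypothesis forces $C(\Pi,N) \equiv 0$ globally.

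The main obstacle is Stage 2, and to a lesser extent Stage 3: passing from a right-invariant tensor to a \emph{multiplicative} one is not automatic, and requires the Lie integration theorem for Lie algebroid morphisms together with $s$-simple-connectedness of $G$ to guarantee that the infinitesimal data lifts uniquely and globally. Once multiplicativity is in hand, the remaining verifications are formal consequences of the right-invariance identities and the hypotheses on $(\Lambda,\mathbf{n})$.
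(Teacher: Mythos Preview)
Your proof is more careful than the paper's in one crucial respect and contains one technical slip.

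The paper's argument for multiplicativity (of both $\Pi$ and $N$) is essentially a direct appeal to condition (5) of Proposition \ref{pro1}, which already \emph{asserts} that $\overrightarrow{\Lambda}^{\sharp}$ and $\overrightarrow{\mathbf{n}}$ are Lie groupoid morphisms; the conclusion is then read off. The $s$-simply connected hypothesis is mentioned just before the theorem but plays no explicit role in the paper's proof. Your Stages 2 and 3, by contrast, earn multiplicativity via the Mackenzie--Xu integration theorem for Lie algebroid morphisms, and this is exactly where the $s$-simply connected assumption does real work. In that sense your route is genuinely different and more substantive. The compatibility verifications in your Stage 4 match the paper's: both reduce $N\circ\Pi^{\sharp}=\Pi^{\sharp}\circ N^{\ast}$ and $C(\Pi,N)=0$ to the corresponding identities for $(\Lambda,\mathbf{n})$ via right-invariance.

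The slip is in Stage 3: right-invariant vector fields on a Lie groupoid do \emph{not} span $T_gG$ pointwise; they are $s$-vertical and span only $\ker(T_g s)$. Hence defining $N$ on right-invariant vector fields and extending $C^{\infty}(G)$-linearly produces only a bundle endomorphism of $T^{s}G$, not a $(1,1)$-tensor on all of $TG$. Your own integration argument in the next sentence rescues this: integrating the Lie algebroid endomorphism $(n,n_M)$ to a groupoid morphism $TG\to TG$ does yield a genuine $(1,1)$-tensor on the full tangent bundle, and that is the $N$ you should work with throughout. The paper's proof has the same lacuna (it too writes $N=\overrightarrow{n}$ without addressing directions transverse to the $s$-fibres), so you are not omitting anything the paper actually supplies.
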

\begin{proof}
According to the definition of the bivector $\Pi$ we have
\[[\Pi,\Pi] = [\overrightarrow{\Lambda},\overrightarrow{\Lambda}]=\overrightarrow{[\Lambda ,\Lambda]}\]
Since $[\Lambda ,\Lambda]=0$, therefore $[\Pi,\Pi] =0$.\\
It only remains to show $\Pi^{\sharp}$ is a Lie groupoid morphism from the cotangent Lie groupoid $T^{\ast} G \rightrightarrows (AG)^{\ast}$ to the tangent Lie groupoid $TG \rightrightarrows TM$. From definition 
\[\Pi^\sharp(\overrightarrow{\alpha})=\overrightarrow{\Lambda}^\sharp(\alpha)\]
and this fact that $\Lambda $ satisfy in conditions of proposition \ref{pro1}, hence according to part (5)  we have  $\overrightarrow{\Lambda}^\sharp$ is Lie groupoid morphism. The following diagram, show that $\Pi^\sharp$ is morphism of groupoids:
\begin{center}
\begin{tikzcd}
  T^\ast G \arrow[to=1-2, shift right=0.65ex]\arrow[to=1-2, shift left=0.65ex]\arrow[to=2-1, "\Pi^\sharp"]
  & A^\ast G \arrow[to=2-2, "\rho_\ast"] \\
   TG \arrow[to=2-2, shift right=0.65ex]\arrow[to=2-2, shift left=0.65ex]
  & TM 

\end{tikzcd}
\end{center}
 Hence $(G, \Pi )$ is Poisson groupoid and this will complete the first part of the proof.\\
According to lemma \ref{lem0} every section of Lie algebroid $AG$ can be extended to a right-invariant vector field on Lie groupoid $G$. Hence the endomorphisms on the sections of Lie algebroid $AG$ can be extended to endomorphisms on tangent space $TG$ such that send right-invariant vector field to itself. Therefore we have $\overrightarrow{n}=N$ (Similarly, we have $ \overrightarrow{n_M}=N_M$).
We can define an isomorphism between sections of Lie algebroid and right invariant vector fields on $G$. We show this isomorphism by $\lambda_\alpha$. Therefore we have the commutative diagram as follow:

\begin{center}
\begin{tikzcd}
  \Gamma (AG) \arrow[to=2-1]\arrow[to=1-2, "n"]\arrow[to=2-1, "\lambda_\alpha"]
  & \Gamma (AG) \arrow[to=2-2]\arrow[to=2-2, "\lambda_\alpha"] \\
   TG \arrow[to=2-2, "N"]
  & TG 

\end{tikzcd}
\end{center}
For expressing the compatibility of a pair $(\Pi , N)$, where $\Pi=\overrightarrow{\Lambda}$ is a Poisson bivector and $N=\overrightarrow{n}$ is multiplicative $(1,1)-$tensor, we verify  $N\Pi=\Pi N$ and $N\Pi^\sharp=\Pi^\sharp N^t$ or equivalently $N \Pi - \Pi N=0$ and $N \circ \Pi^\sharp-\Pi^\sharp \circ N^\ast=0$ as follow:
\[N \Pi - \Pi N=\overrightarrow{n} \overrightarrow{\Lambda} - \overrightarrow{\Lambda} \overrightarrow{n}=\overrightarrow{n\Lambda} -\overrightarrow{\Lambda n} \]
Since $n$ and $\Lambda$ are compatible, therefore $n\Lambda = \Lambda n$. Hence
\[\overrightarrow{n\Lambda} -\overrightarrow{\Lambda n}=\overrightarrow{\Lambda n}-\overrightarrow{\Lambda n}=0\]
Also we have
\[N \circ \Pi^\sharp-\Pi^\sharp \circ N^\ast=\overrightarrow{n} \circ \overrightarrow{\Lambda^\sharp}-\overrightarrow{\Lambda}\circ \overrightarrow{n}^\ast=\overrightarrow{n \circ \Lambda^\sharp}-\overrightarrow{\Lambda \circ n^\ast}\]
With respect to part 3 of proposition \ref{pro1} we have $n \circ \Lambda^\sharp=\Lambda \circ n^\ast$, therfore 
\[\overrightarrow{n \circ \Lambda^\sharp}-\overrightarrow{\Lambda \circ n^\ast}=\overrightarrow{n \circ \Lambda^\sharp}-\overrightarrow{n \circ \Lambda^\sharp}=0\]
The concomitant Magri-Morosi $C(\Pi,N)(\overrightarrow{\alpha},\overrightarrow{\beta})=-\overrightarrow{C(\Lambda ,n)(\alpha , \beta)}$, hence $C(\Lambda ,n)=0$ gives $C(\Pi,N)=0$.
\end{proof}
$(\Lambda , n)$ which satisfy condition (1-5) of proposition \ref{pro1}; so-called \textit{$(\Lambda , n)-$ structure} on the Lie algebroid $AG$ and $(\overrightarrow{\Lambda},\overrightarrow{n})$ is right-invariant Poisson-Nijenhuis structure on $G$. similarly, we can define left-invariant Poisson-Nijenhuis structures. It is sufficient to replace right by left.

\section{Example}
In this section we consider the groupoid $G \rightrightarrows M$ with Lie algebroid $A$. As we know the tangent Lie groupoid of  the groupoid $G \rightrightarrows M$ is $TG \rightrightarrows TM$ whose structural maps are all obtained by taking the derivatives of the structural maps of $G$ and the cotangent bundle of a Lie groupoid $G \rightrightarrows M$ also carries a natural Lie groupoid structure, $T^\ast G \rightrightarrows A^\ast  $, where $A^\ast $ is the dual vector bundle to $A$. The source and target maps $\tilde{s}, \tilde{t}: T^\ast G \to A^\ast  $ are defined by the restriction of covectors to the subspaces tangent to the $s-$fibers and $t-$fibers, respectively \cite{Bursztyn}.\\
In this section, we define a right-invariant Poisson-Nijenhuis structure for Piar groupoids. Also, we show that there is a one-to-one correspondence between the Poisson-Nijenhuis Lie algebroid and corresponding Lie groupoid from the invariance's point of view. 

\begin{example}
Any manifold $M$ gives rise to another Lie groupoid called the pair groupoid of $M$ which is indicated by  $M \times M \rightrightarrows M$ and a pair $(x, y)$ is seen as an arrow from $x$ to $y$ and the composition is given by:
\[ (x, y).(y, z)  = (x, z)\]
Clearly, one has $1_x = (x, x)$ and $(x, y)^{-1}=(y, x)$.\\
The Lie algebroid of the pair groupoid $M \times M \rightrightarrows M$ is $TM \to M$ with the usual Lie bracket of vector fields and the identity map as the anchor.\\
Let $\Pi$ be a Poisson bivector on Lie groupid $M\times  M \rightrightarrows M$. Since there are two functions 
\[\Pi^\sharp:T^\ast(M\times M) \to T(M\times M),\;id:A^\ast  \to TM\] such that the diagram

\begin{center}
\begin{tikzcd}
  T^\ast(M\times M) \arrow[to=2-1, shift right=0.65ex]\arrow[to=2-1, shift left=0.65ex]\arrow[to=1-2, "\Pi^\sharp"]
  & T (M \times M) \arrow[to=2-2, shift right=0.65ex]\arrow[to=2-2, shift left=0.65ex] \\
   A^\ast \arrow[to=2-2]\arrow[to=2-2, "id"]
  & TM

\end{tikzcd}
\end{center}
is commutative. The existence of these functions concludes from this fact that says: for every Lie groupoid $ G \rightrightarrows M $, there is a natural morphism of Lie groupoids $ G \rightrightarrows M \times M, \; g \mapsto (s (g), t (g)) $. Also, every Poisson manifold $(M,\Pi)$ can be seen as a Lie algebroid $A$ with a vector bundle isomorphism $A \simeq T^\ast M$ satisfying certain properties \cite{Luca}.\\
Hence $\Pi^\sharp: T^\ast (M \times M) \to T(M \times M)$ is a Lie groupoid morphism between $T^\ast(M\times M) \rightrightarrows A^\ast $ and $T(M\times M) \rightrightarrows TM$, where $A^\ast $ is dual of Lie algebroid of $M\times M \rightrightarrows M$. Therefore the Lie groupoid $M \times M \rightrightarrows M$ is a Poisson groupoid.\\
The existence of Nijenhuis tensor on pair groupoid $M \times M \rightrightarrows M$ is obvious. Consider $N=id$ and $N_M$ consider the function that project the tangent space to the base space as follow:
\begin{center}
\begin{tikzcd}
  T^\ast(M\times M) \arrow[to=2-1, shift right=0.65ex]\arrow[to=2-1, shift left=0.65ex]\arrow[to=1-2, "id"]
  & T (M \times M) \arrow[to=2-2, shift right=0.65ex]\arrow[to=2-2, shift left=0.65ex] \\
   TM \arrow[to=2-2]\arrow[to=2-2, "Proj"]
  & M

\end{tikzcd}
\end{center}
Examining the terms of the definition \ref{def2} is simple.\\
Examining the conditiones of the definition \ref{2.6} are as follow:\\
Let $f \in T^\ast (M \times M)$, then we have:
\[(\mathbf{N} \circ \Pi^\sharp)(f)=\mathbf{N}(\Pi^\sharp(f))=(id(\Pi^\sharp(f)),Proj (\Pi^\sharp(f)))\]
On the other hand
\[(\Pi^\sharp \circ N^\ast)(f)=(\Pi^\sharp\circ \mathbf{N})(f)=\Pi^\sharp(\mathbf{N}(f))=\Pi^\sharp(id(f),Proj(f))\]
With respect to this fact that $\Pi^\sharp(f)$ and $\mathbf{N}$ are Lie group homomorphisms, we have: 
\[\mathbf{N}\circ \Pi^\sharp = \Pi^\sharp \circ \mathbf{N}^\ast .\]
It is sufficient to show that $C(\Pi , \mathbf{N})=0$, this is true because of we consider $\mathbf{N}=(id,Proj)$ and usual bracket.\\
Hence $(M\times M \rightrightarrows M, \Pi, \mathbf{N})$ is Poisson-Nijenhuis Lie groupoid.\\
As we mentioned above the Lie algebroid of $M \times M \rightrightarrows M$ is $TM \to M$ with identity map as anchor and usual bracket, we use $A$ for it.\\
Now according to lemma \ref{lem0} we can consider $\Lambda \in \Gamma (A)$ and an arbitarary endomorphism such as $n$ on algebroid $A$ and extend these to the right (left) invariant vector field $\overrightarrow{\Lambda},\overrightarrow{n} $ on groupoid. With respect to theorem \ref{theorem1} $(\overrightarrow{\Lambda},\overrightarrow{n}) $ are Poisson-Nijenhuis structure on pair groupoid.
\end{example}
\bigskip
\providecommand{\bysame}{\leavevmode\hbox
to3em{\hrulefill}\thinspace}

\end{document}